\numberwithin{equation}{section}
\theoremstyle{theorem}
\newtheorem{theorem}{Theorem}[section]
\newtheorem{proposition}[theorem]{Proposition}
\newtheorem{lemma}[theorem]{Lemma}
\theoremstyle{definition}
\theoremstyle{remark}
\theoremstyle{proof}
\newcommand{\R}{\mathbb{R}}
\renewcommand{\H}{\mathbb{H}}
\newcommand{\cE}{\mathcal{E}}
\newcommand{\cF}{\mathcal{F}}
\newcommand{\cG}{\mathcal{G}}
\newcommand{\cI}{\mathcal{I}}
\newcommand{\cP}{\mathcal{P}}
\newcommand{\cS}{\mathcal{S}}
\newcommand{\cT}{\mathcal{T}}
\newcommand{\sF}{\mathscr{F}}
\newcommand{\sT}{\mathcal{T}}
\newcommand{\dom}{\operatorname{Dom}}
\newcommand{\E}{\mathbb{E}}
\renewcommand{\P}{\mathbb{P}}
\newcommand{\ds}{\displaystyle}
\newcommand{\norm}[1]{\| #1 \|}
\newcommand{\paren}[1]{( #1 )}
\newcommand{\Paren}[1]{\left( #1 \right)}
\newcommand{\abs}[1]{| #1 |}
\newcommand{\Abs}[1]{\left| #1 \right|}
\newcommand{\brk}[1]{\langle #1 \rangle}
\renewcommand{\cE}{\mathbf{E}^{s}}
\renewcommand{\cP}{\mathbf{P}^{s}}
\newcommand{\proj}{\Phi}
\begin{document}

\title[Martingale transforms and the HLS inequality for semigroups]{Martingale transforms and the Hardy-Littlewood-Sobolev inequality for semigroups}
\author{Daesung Kim}
\address{Department of Mathematics, Purdue University, West Lafayette, IN 47907, USA}
\email{kim1636@purdue.edu}
\subjclass[]{}
\date{\today}
\keywords{}
\begin{abstract}
We give a representation of the fractional integral for symmetric Markovian semigroups as the projection of martingale transforms and prove the Hardy-Littlewood-Sobolev(HLS) inequality based on this representation. The proof rests on a new inequality for  a fractional Littlewood-Paley $g$-function.
\end{abstract}

\maketitle
\section{Introduction}

The classical inequality of Hardy, Littlewood \cite{HL28, HL30} and Sobolev \cite{So38} (HLS) has been extensively studied by many researchers for several years now. 
In particular, there has been a lot of effort to find the sharp constants of the HLS inequality.
In 1983, E. H. Lieb \cite{Li83} showed the existence of maximizing functions and the sharp constants by using symmetric decreasing rearrangements. 
E. A. Carlen and M. Loss \cite{CL90} derive the sharp HLS inequality in an ingenious way using the idea of competing symmetry.
Both proofs, however, utilize the symmetric decreasing rearrangement technique that relies quite heavily on the geometry of $\R^{d}$. 
In a recent paper \cite{FL12a}, R. Frank and E. Lieb employ a radically new, rearrangement-free method to compute the sharp constant for the HLS inequality on $\R^{d}$, which leads to an analogue of the sharp inequality on the Heisenberg group $\H^{d}$. 
 
The HLS inequality has been quite influential in applications to heat kernel estimates in many different settings since the pioneering work of N. Varopoulos in \cite{Va85}, E.~B.~Davis \cite{Da90} and others who put it in the frame of general Markovian semigroups; see also \cite{SalVarCou}.    The purpose of this paper is to give a probabilistic representation for fractional integrals for general symmetric Markovian semigroups and derive the HLS inequality based on the techniques of Gundy and Varopoulos \cite{GV79} used to represent Riesz transforms via harmonic extensions. 
Our representation is a variation of the one used by D. Applebaum and R. Ba\~nuleos in \cite{AB13} based on space-time Brownian motion often used for second order Riesz transforms.  
In \cite{AB13}, Applebaum and Ba\~nuelos give a proof of the HLS inequality on $\R^d$ using their representation and the martingale inequalities of Doob, Burkholder-Davis-Gundy.  
Unlike the space-time Brownian motion representation which requires a gradient in the space variable (or a carr\'e du champ), our representation only requires the time derivative which is well defined for general semigroups. 

The probabilistic representation of the fractional integrals can be thought of as martingale transforms where the predictable sequence is not bounded.  
Martingale transform techniques have been used quite effectively in the study of singular integral operators, particularly in obtaining optimal, or near optimal, inequalities.  
For some of this extensive literature on this subject, we refer the reader to \cite{ Ba86, BanMen, Ba13s, Ge10, Li08, Os3, VN04} and references therein. 
Given the powerful martingale and Bellman function methods pioneered by Burkholder in \cite{Bu84} to obtain sharp inequalities for martingale transform and their many subsequent uses in various problems in analysis and probability (see for example A. Os\c ekowski \cite{Os12}), it is natural to ask if those techniques can be extended to martingale transforms with unbounded multipliers and provide a different proof of the sharp HLS inequalities which could be extended to other settings. 
Unfortunately, as of now we have not been able to obtain the sharp results with the Bellman function methods.
This remains an interesting challenging problem.  

Our proof of HLS is based on the probabilistic representation for the fractional integral and relies on a new inequality for a fractional Littlewood-Paley $g$--function for general semigroups.  
For this, we will use the ``optimal'' splitting point technique of Stein \cite{St70s} and Hedberg \cite{He72} and an estimate for a classical Littlewood-Paley $g$--function in Stein \cite{St70t}. 
It is interesting to note that the boundedness of Stein's Littlewood-Paley $g$--function is based on the Burkholder-Davis-Gundy (BDG) inequalities for discrete martingales and hence, indirectly, the proof contained here for the HLS inequality is probabilistic.  
The basic question, in connection to the problem of finding the sharp inequality, is how to bypass the Littlewood-Paley $g$--function method.  
A much more preliminary and basic question is how to avoid the ``optimal'' splitting argument of  Stein \cite{St70s} and Hedberg \cite{He72}.  
This  optimal splitting is also a key step in the proof of Applebaum and Ba\~nuleos although it is done in combination with the BDG inequalities. 
It should also be mentioned here that a rather simple argument for general semigroups, which is essentially the same as the one given in \cite{St70s} for the classical case, is given in  \cite{AB13} without appealing to any probability. 
But again, those arguments use the ``optimal'' splitting and do not give the optimal constant.  

Let $\cS$ be a locally compact space with countable base equipped with a positive Radon measure $dx$ on $\cS$ and $\{T_{t}\}_{t\geq0}$ a strongly continuous symmetric Markovian semigroup.
Furthermore, we assume that the semigroup is Feller and has the \emph{Varopoulos} dimension $d$ that we shall define below.
The fractional integral of order $\alpha$ ($0<\alpha<d$) associated to $\{T_{t}\}_{t\geq0}$ is defined by
\begin{eqnarray}\label{eq:fracint}
\cI_{\alpha}(f)(x)
=\frac{1}{\Gamma(\frac{\alpha}{2})}\int_{0}^{\infty}t^{\frac{\alpha}{2}-1}T_{t}f(x)dt.
\end{eqnarray}
It is noteworthy that if $\{T_{t}\}_{t\geq0}$ is the standard heat semigroup on $\R^{d}$ then the definition of the fractional integral \eqref{eq:fracint} is equivalent to
\begin{eqnarray*}
\cI_{\alpha}(f)(x)
=\frac{\Gamma(\frac{d-\alpha}{2})}{2^{\alpha}\pi^{d/2}\Gamma(\frac{\alpha}{2})}\int_{\R^{d}}\frac{f(y)}{|x-y|^{d-\alpha}}dy.
\end{eqnarray*}
Let $\frac{1}{q}=\frac{1}{p}-\frac{\alpha}{d}$, $\frac{1}{q}+\frac{1}{q'}=1$ and $0<\alpha<d$, then the HLS inequality for $\cI_{\alpha}$ states that there exists a constant $C_{\alpha,p,d}$ such that
\begin{eqnarray}\label{eq:HLS}
\abs{\brk{\cI_{\alpha}(f),h}}\leq C_{\alpha,p,d}\norm{f}_{p}\norm{h}_{q'}
\end{eqnarray}
for every $f\in L^{p}$ and $h\in L^{q'}$.

Suppose that $(X_{t})_{t\geq0}$ is a stochastic process associated to $\{T_{t}\}_{t\geq0}$ and $(Y_{t})_{t\geq0}$ is a standard 1-dimensional Brownian motion independent of $(X_{t})_{t\geq0}$. 
We shall denote by  $Z_{t}=(X_{t}, Y_{t})$ for simplicity.
We note that the stochastic process $(X_{t})_{t\geq0}$ has a c\`adl\`ag version and the strong Markov property due to the Feller property of $\{T_{t}\}_{t\geq0}$.
We assume that the initial distribution of $(Z_{t})_{t\geq0}$ is given by $dx\otimes\delta_{s}$ for fixed $s>0$ and denote by $\cE$ the corresponding expectation. 
Let $\tau$ be the hitting time of $Y_{t}$ at $0$ and denote by $\{P_{y}\}_{y\geq0}$ the Poisson semigroup associated with $\{T_{t}\}_{t\geq0}$.  (We will describe this semigroup more precisely below.) Let $u_{f}(x,y)=P_yf(x)$ be the harmonic extension of $f$ defined on $\cS\times [0,\infty)$.
We set 
\begin{eqnarray}\label{probrep}
\sT_{\alpha}^{s}(f)(x)=\cE[\int_{0}^{\tau}Y_{t}^{\alpha}\frac{\partial u_{f}}{\partial y}(Z_{t})dY_{t}|X_{\tau}=x].
\end{eqnarray}
The main result of the paper is to show that $\sT_{\alpha}^{s}$ defined above gives a probabilistic representation of the fractional integral and that it satisfies the analogue of the HLS inequality \eqref{eq:HLS}.  More precisely we ahve  
\begin{theorem}\label{thm1}
Let $s>0$ and $f,h\in C_{0}(\cS)$. If $\frac{1}{q}=\frac{1}{p}-\frac{\alpha}{d}$, $1<p<q<\infty$ and $0<\alpha<d$, then we have 
\begin{eqnarray}\label{eq:thm1}
\Abs{\langle \sT^{s}_{\alpha}f,h \rangle }
=\Abs{ \cE[\int_{0}^{\tau}Y_{t}^{\alpha}\frac{\partial u_{f}}{\partial y}(Z_{t})\frac{\partial u_{h}}{\partial y}(Z_{t})dt]}
\leq C_{\alpha,p,d}\norm{f}_{p}\norm{h}_{q'}
\end{eqnarray}
where $C_{\alpha,p,d}$ depends only on $\alpha$, $p$ and $d$ and $q'$ is the conjugate exponent of $q$. 
As a consequence, we obtain
\begin{eqnarray*}
\lim_{s\to\infty}\sT_{\alpha}^{s}(f)=\frac{\Gamma(\alpha+2)}{2^{\alpha+2}} \cI_{\alpha}(f)
\end{eqnarray*}
in the sense of distributions.
\end{theorem}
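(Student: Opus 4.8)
The plan is to prove the two parts of Theorem~\ref{thm1} in sequence: first the bilinear bound, then the distributional limit identifying $\sT^s_\alpha$ with a multiple of $\cI_\alpha$. For the first identity in \eqref{eq:thm1}, I would start from the representation \eqref{probrep} and use the Markov property together with the fact that $(Y_t)$ is independent of $(X_t)$, so that conditioning on $X_\tau = x$ and then integrating against $h(x)\,dx$ collapses the conditional expectation; the stochastic integral $\int_0^\tau Y_t^\alpha \frac{\d u_f}{\d y}(Z_t)\,dY_t$ pairs against $u_h$ evaluated on the boundary (i.e.\ $h$ itself, since $u_h(\cdot,0)=h$), and an It\^o/integration-by-parts computation turns the boundary pairing into the space-time integral $\cE[\int_0^\tau Y_t^\alpha \frac{\d u_f}{\d y}(Z_t)\frac{\d u_h}{\d y}(Z_t)\,dt]$. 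This is the standard Gundy--Varopoulos manipulation adapted to the weight $Y_t^\alpha$, and it is essentially bookkeeping with the stationarity of the initial law $dx\otimes\delta_s$.

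For the inequality itself, I would dominate the space-time integral pointwise in $\omega$ by Cauchy--Schwarz in the $dt$ integral, producing two square-function factors: $\big(\int_0^\tau Y_t^{2\alpha}(\frac{\d u_f}{\d y})^2\,dt\big)^{1/2}$ and the analogous one for $h$ with weight $Y_t^0$. Taking $\cE$ and using H\"older with exponents $q,q'$ (or rather $p$ and then interpolating) reduces the bound to controlling two $g$-function type quantities in $L^p$ and $L^{q'}$ respectively. The key point is that the $\alpha$-weighted square function $\big(\int_0^\infty y^{2\alpha}|\frac{\d}{\d y}P_y f|^2\,y\,dy\big)^{1/2}$ -- obtained after averaging out the Brownian motion via the occupation-time density of $(Y_t)$ up to $\tau$, which gives the weight $y\wedge s$ or simply $y$ in the relevant range -- is exactly the fractional Littlewood-Paley $g$-function for which the paper has (by assumption, from the earlier ``new inequality'') the bound $\|g_\alpha(f)\|_q \lesssim \|f\|_p$ under the scaling relation $\frac1q = \frac1p - \frac\alpha d$, while for $h$ one uses the classical ($\alpha=0$) Stein $g$-function bounded on $L^{q'}$. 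Combining these gives \eqref{eq:thm1}. The main obstacle here is making the passage from the Brownian occupation measure to the clean $y\,dy$ weight rigorous uniformly in $s$, and keeping track of the constants so that $C_{\alpha,p,d}$ genuinely does not depend on $s$.

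For the distributional limit, I would compute $\langle \sT^s_\alpha f, h\rangle$ explicitly using the first identity and the occupation-time formula: $\cE[\int_0^\tau Y_t^\alpha \frac{\d u_f}{\d y}\frac{\d u_h}{\d y}\,dt] = \int_\cS \int_0^\infty (\text{density of time spent near }y)\, y^\alpha\, \partial_y P_y f\, \partial_y P_y h\, dy\, dx$. Since $(Y_t)$ starts at $s$ and $\tau$ is the hitting time of $0$, the expected occupation density at level $y$ is $2(y\wedge s)$ (the Green's function of Brownian motion on $(0,\infty)$ killed at $0$, started at $s$). Thus $\langle \sT^s_\alpha f,h\rangle = 2\int_\cS\int_0^\infty (y\wedge s)\, y^\alpha\, \partial_y P_y f\,\partial_y P_y h\, dy\,dx$, and as $s\to\infty$ this converges (by monotone/dominated convergence, the bound from part one guaranteeing integrability) to $2\int_\cS\int_0^\infty y^{\alpha+1}\,\partial_y P_y f\,\partial_y P_y h\,dy\,dx$. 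Then I would integrate by parts in $y$ twice, using $\partial_y^2 P_y = -\partial_y^2 P_y$... more precisely using that $P_y = e^{-y\sqrt{-L}}$ so $\partial_y^2 P_y f = (-L)P_y f$ and the subordination identity $\int_0^\infty y^{\alpha+1} \partial_y P_y f\, dy$ relates to $\int_0^\infty t^{\alpha/2-1}T_t f\,dt$ via the formula $\Gamma(s)\lambda^{-s} = \int_0^\infty y^{s-1}e^{-\lambda y}\,dy$; carrying out the beta-integral bookkeeping produces exactly the constant $\Gamma(\alpha+2)/2^{\alpha+2}$ times $\cI_\alpha(f)$ paired with $h$. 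The obstacle in this last step is purely computational: correctly performing the iterated integration by parts (boundary terms vanish because $P_y f \to 0$ as $y\to\infty$ for $f\in C_0(\cS)$ and the weights kill the $y=0$ endpoint) and matching the Gamma-function constants; the functional-analytic content is light once part one supplies the needed integrability to justify Fubini and the limit.
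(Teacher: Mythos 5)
You have correctly identified all the structural ingredients the paper uses: the occupation-time (Green's function) formula converting the stochastic time integral to $2\int_\cS\int_0^\infty(y\wedge s)(\cdot)\,dy\,dx$, the split into the fractional $g$-function $\cG_\alpha(f)$ and the classical Stein $g_1(h)$, the bounds $\|\cG_\alpha(f)\|_q\lesssim\|f\|_p$ (Theorem~\ref{lem1}) and $\|g_1(h)\|_{q'}\lesssim\|h\|_{q'}$ (Proposition~\ref{prop:gfunc}), and the spectral/subordination computation that produces $\Gamma(\alpha+2)/2^{\alpha+2}$ in the $s\to\infty$ limit. In broad outline this is the paper's proof. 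However, two steps are described in a way that, taken literally, would not work.

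The first concerns the order of operations in proving the bound. You propose to apply Cauchy--Schwarz in $t$ pointwise in $\omega$, then take $\cE$, then H\"older with exponents $q,q'$. As written this produces $\cE\big[\big(\int_0^\tau Y_t^{2\alpha}|\partial_y u_f|^2\,dt\big)^{q/2}\big]^{1/q}$, an $L^q(\Omega,\cP)$ norm of a \emph{random} square function, which is not $\|\cG_\alpha(f)\|_q$, an $L^q(\cS,dx)$ norm of a deterministic one; ``averaging out the Brownian motion'' does not pass through the power $q/2$ when $q\neq 2$ (conditional Jensen goes the wrong way). The paper avoids this entirely by applying the Green formula \emph{first}, collapsing the expectation to the deterministic double integral $2\int_\cS\int_0^\infty(y\wedge s)y^\alpha|\partial_y u_f|\,|\partial_y u_h|\,dy\,dx$, dropping $(y\wedge s)\le y$, then Cauchy--Schwarz in the $y$-integral alone (pointwise in $x$, via the split $y^{\alpha+1}=y^{\alpha+1/2}\cdot y^{1/2}$) to produce exactly $\cG_\alpha(f)(x)\,g_1(h)(x)$, and only then H\"older in $x$. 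The second concerns the identity $\langle\sT_\alpha^s f,h\rangle=\cE[\int_0^\tau Y_t^\alpha\partial_y u_f\,\partial_y u_h\,dt]$: because $Y_t^\alpha$ is an unbounded multiplier, the paper truncates to $Y_t^\alpha\wedge N$, verifies via the Green formula and the $L^2$ bound on $g_1$ that the truncated stochastic integral lies in $L^2(\Omega,\cP)$, applies the projection lemma (Proposition~\ref{prop:proj}) to pair with $\int_0^\tau\partial_y u_h\,dY_t$ by It\^o isometry, and then lets $N\to\infty$ using the $L^1$ bound from the first step. Your It\^o ``bookkeeping'' gloss skips the truncation and $L^2$ verification, without which the projection lemma and the conditional-expectation manipulations are not justified. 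Your treatment of the $s\to\infty$ limit is essentially the paper's spectral computation and is fine.
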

The proof of Theorem \ref{thm1} relies on an auxiliary function which satisfies an HLS-type inequality. To be specific, we define the fractional Littlewood-Paley function $\cG_{\alpha}$ by
\begin{eqnarray}\label{eq:fracgfunc}
\cG_{\alpha}(f)(x)=\Paren{\int_{0}^{\infty}y^{2\alpha+1}\Abs{\frac{\partial u_{f}}{\partial y}(x,y)}^{2}dy}^{1/2}.
\end{eqnarray}
The next theorem says that the fractional Littlewood-Paley function enjoys the HLS-type inequality, which will help us to show that the probabilistic representation $\sT_{\alpha}^{s}$ inherits the HLS inequality from $\cG_{\alpha}$.

\begin{theorem}\label{lem1}  
Let $\frac{1}{q}=\frac{1}{p}-\frac{\alpha}{d}>0$, $1<p<q<\infty$ and $0<\alpha<d$. If $f\in L^{p}(\cS)$, then the fractional Littlewood-Paley function $\cG_{\alpha}(f)$ defined in \eqref{eq:fracgfunc} satisfies
\begin{eqnarray*}
\norm{\cG_{\alpha}(f)}_{q}\leq C_{\alpha,p,d}\norm{f}_{p}.
\end{eqnarray*}
\end{theorem}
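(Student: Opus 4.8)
The plan is to reduce the inequality of Theorem~\ref{lem1} to the $L^{p}$-boundedness of Stein's (one-dimensional) Littlewood--Paley $g$-function
\[
g(f)(x)=\Paren{\int_{0}^{\infty}y\Abs{\frac{\partial u_{f}}{\partial y}(x,y)}^{2}\,dy}^{1/2},
\]
which satisfies $\norm{g(f)}_{r}\leq C_{r}\norm{f}_{r}$ for every $1<r<\infty$ (the estimate of Stein \cite{St70t}, itself a consequence of the Burkholder--Davis--Gundy inequalities for discrete martingales), combined with the ``optimal splitting'' argument of Stein \cite{St70s} and Hedberg \cite{He72}. Concretely, I would prove the pointwise bound $\cG_{\alpha}(f)(x)\leq C\norm{f}_{p}^{\alpha p/d}\,g(f)(x)^{1-\alpha p/d}$ and then integrate in $x$.

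The one non-elementary input is the decay estimate
\[
\Abs{\frac{\partial u_{f}}{\partial y}(x,y)}\leq C_{p,d}\,y^{-1-d/p}\norm{f}_{p}\qquad(f\in L^{p}(\cS)),
\]
which I would derive from the Varopoulos dimension. By definition the semigroup is ultracontractive, $\norm{T_{t}}_{L^{1}\to L^{\infty}}\leq Ct^{-d/2}$; subordinating $P_{y}=\int_{0}^{\infty}T_{t}\,\mu_{y}(dt)$ against the one-sided stable-$\tfrac12$ density $\mu_{y}$ gives $\norm{P_{y}}_{L^{1}\to L^{\infty}}\leq Cy^{-d}$, hence $\norm{P_{y}}_{L^{p}\to L^{\infty}}\leq Cy^{-d/p}$ by interpolation with $\norm{P_{y}}_{L^{\infty}\to L^{\infty}}\leq1$. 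Writing $A$ for the non-negative generator of $\{T_{t}\}_{t\geq0}$, so that $P_{y}=e^{-y\sqrt{A}}$ and $\partial_{y}u_{f}(\cdot,y)=-(\sqrt{A}\,P_{y/2})(P_{y/2}f)$, and using the bound $\norm{\sqrt{A}\,P_{y/2}}_{L^{\infty}\to L^{\infty}}\leq C/y$ (analyticity of the Poisson semigroup, equivalently an $L^{1}$ bound on $\partial_{y}$ of the Poisson kernel), one obtains the claim.

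With the decay estimate at hand, for each $x\in\cS$ and $R>0$ I split
\[
\cG_{\alpha}(f)(x)^{2}=\int_{0}^{R}y^{2\alpha+1}\Abs{\frac{\partial u_{f}}{\partial y}(x,y)}^{2}dy+\int_{R}^{\infty}y^{2\alpha+1}\Abs{\frac{\partial u_{f}}{\partial y}(x,y)}^{2}dy.
\]
On $(0,R)$ I bound $y^{2\alpha+1}\leq R^{2\alpha}y$, so the first term is $\leq R^{2\alpha}g(f)(x)^{2}$. In the second term I insert the decay estimate; since $\frac1q=\frac1p-\frac\alpha d>0$ forces $\alpha<\frac dp$, the exponent $2\alpha-1-\frac{2d}{p}$ is $<-1$, so $\int_{R}^{\infty}y^{2\alpha-1-2d/p}dy=CR^{2\alpha-2d/p}$ and the second term is $\leq C\norm{f}_{p}^{2}R^{2\alpha-2d/p}$. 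Choosing $R=R(x)$ with $R^{d/p}=\norm{f}_{p}/g(f)(x)$ equates the two bounds and gives $\cG_{\alpha}(f)(x)\leq C\norm{f}_{p}^{\alpha p/d}g(f)(x)^{1-\alpha p/d}$ (trivially true when $g(f)(x)\in\{0,\infty\}$, and $g(f)<\infty$ a.e.\ since $g(f)\in L^{p}$). Taking $L^{q}$ norms and noting that the relation $\frac1q=\frac1p-\frac\alpha d$ is exactly what makes $1-\frac{\alpha p}{d}=\frac pq$, hence $q\big(1-\frac{\alpha p}{d}\big)=p$, we conclude
\[
\norm{\cG_{\alpha}(f)}_{q}\leq C\norm{f}_{p}^{\alpha p/d}\norm{g(f)}_{p}^{1-\alpha p/d}\leq C\norm{f}_{p},
\]
where the last step uses Stein's bound $\norm{g(f)}_{p}\leq C\norm{f}_{p}$ (valid since $p>1$).

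The step I expect to be the main obstacle is the decay estimate: in the generality of symmetric Feller Markovian semigroups one must verify the chain ``ultracontractivity of $\{T_{t}\}$ $\Rightarrow$ ultracontractivity of $\{P_{y}\}$ $\Rightarrow$ $L^{p}\to L^{\infty}$ bound'', control the $y$-derivative of the Poisson extension via analyticity of $\{P_{y}\}$ (equivalently, an $L^{1}$ bound on $\partial_{y}$ of the Poisson kernel obtained from the stable subordinator), and check that $u_{f}$ is regular enough to justify these manipulations for merely $L^{p}$ data. Everything after that is the standard Hedberg interpolation, which is also the step at which any hope of obtaining the sharp constant disappears.
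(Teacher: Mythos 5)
Your proof is correct, and it follows the same Stein--Hedberg optimal-splitting strategy as the paper but with a genuinely different controlling quantity on the near-zero piece of the $y$-integral. You keep the raw weight $y$ on $(0,R)$ so the near piece becomes $R^{2\alpha}g(f)(x)^{2}$, and then invoke Stein's $L^{p}$-bound for the Littlewood--Paley $g$-function (the paper's Proposition~\ref{prop:gfunc}); the paper instead applies the derivative estimate $\abs{y\,\partial_{y}u_{f}(x,y)}\leq c_{1}\,u_{\abs{f}}(x,y/\sqrt{2})$ (Lemma~\ref{lem:ptest}) to \emph{both} pieces of the split, turning the near piece into $C\,\delta^{2\alpha}\sup_{y>0}\abs{u_{\abs{f}}(x,y)}^{2}$, and closes with the maximal ergodic theorem (Proposition~\ref{prop:stmax}) rather than the $g$-function bound. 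Your decay estimate $\abs{\partial_{y}u_{f}(x,y)}\leq C\,y^{-1-d/p}\norm{f}_{p}$, which you flag as the potential obstacle, is in fact an immediate combination of Lemma~\ref{lem:sms} (ultracontractivity of $P_{y}$ from the Varopoulos dimension) with Lemma~\ref{lem:ptest} (the kernel bound $\abs{y\,\partial_{y}\eta_{y}(s)}\leq c_{1}\eta_{y/\sqrt{2}}(s)$ for the stable subordinator), so there is no real gap there. The two routes cost roughly the same: both are Hedberg interpolations that bury the constant in one of Stein's universal semigroup inequalities and hence give no access to sharpness. The paper's variant is marginally more economical since Doob's maximal inequality (constant $p/(p-1)$) is a lighter tool than the full $g$-function estimate and a single lemma handles both pieces; your variant has the small advantage of making $g_{1}$ --- which the paper needs anyway in the proof of Theorem~\ref{thm1} --- the only Stein input.
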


The rest of the paper is organized as follows. 
In \S\ref{sec:semigp}, we collect the required definitions and results from the general heat semigroup theory which allow us to carry out the Gundy-Varopoulos construction.
In \S\ref{sec:stocproc}, we review some of the tools of Varopoulos \cite{Va80}.
More precisely, we construct the stochastic process corresponding to the semigroup and introduce basic concepts such as the initial distribution, stochastic integrals and conditional expectations. 
We finish this section by presenting the projection theorem that plays a critical role in the proof of Theorems \ref{thm1}.  
The last section is devoted to the proofs of Theorems \ref{thm1} and \ref{lem1}.

\subsection*{Notations}
The space of all continuous functions on $\cS$ vanishing at $\infty$ is denoted by $C_{0}(\cS)$. 
We also use $C_{c}(\cS)$ to denote the space of all compactly supported continuous functions.
The lower case letter$c$, $c_{1}$, $c_{2} \cdots$ denote generic constants which may change from line to line.
We use the notation $C_{p,q,r}$ to specify that the constant depends on $p$, $q$ and $r$.
We denote the inner product by $\brk{f,g}=\int_{\cS}f(x)g(x)dx$ for notational convenience.
The domain of an operator $A$ is denoted by $\dom(A)$.

\section{Preliminaries}
\subsection{General semigroup theory}\label{sec:semigp}  
Here we recall some facts about semigroups concentrating on what we need in the subsequent sections, particularly the definition of strongly continuous symmetric Markovian semigroup and the construction of the Poisson semigroup used in the probabilistic representation of the fractional integral \eqref{probrep}. 

We say that a semigroup $\{T_{t}\}_{t\geq0}$ on $\cS$ is a \emph{symmetric Markovian semigroup} if it satisfies the followings properties. 
\begin{enumerate}
\item[(S1)] $T_{t}f\geq 0$ whenever $f\geq 0$;
\item[(S2)] $T_{t}1=1$;
\item[(S3)] (Symmetry) $\langle T_{t}f, g \rangle=\langle f, T_{t}g \rangle$ for every $f,g\in L^{2}(\cS)$ and every $t\geq 0$;
\item[(S4)] ($L^{p}$-contraction) $\norm{T_{t}f}_{p}\leq \norm{f}_{p}$ whenever $f\in L^{p}(\cS)$ for every $1\leq p\leq\infty$.
\end{enumerate}
Suppose that there exists a symmetric Markovian semigroup $\{T_{t}\}_{t\geq0}$ on $\cS$.
Furthermore, we assume that the semigroup is \emph{strongly continuous} on $L^{2}(\cS)$ and a \emph{Feller semigroup}. In other words, $\{T_{t}\}_{t\geq0}$ satisfies for any $f\in C_{0}(\cS)$ that
\begin{enumerate}
\item[(S5)] (Strong continuity) $\ds\lim_{t\to0}\norm{T_{t}f-f}_{2}=0$ for all $f\in L^{2}(\cS)$ and 
\item[(S6)] (Feller) for all $f\in C_{0}(\cS)$, $T_{t}f\in C_{0}(\cS)$ for all $t\geq0$ and $\ds\lim_{t\to0}\norm{T_{t}f-f}_{\infty}=0$.
\end{enumerate}
In \S\ref{sec:stocproc}, we construct a stochastic process associated to the semigroup $\{T_{t}\}_{t\geq0}$. The assumption that the semigroup is Feller ensures that the stochastic process has c\`adl\`ag paths and the strong Markov property.
The semigroup $\{T_{t}\}_{t\geq0}$ is assumed to have the \emph{Varopoulos} dimension $d$ ($d>2$) introduced by Varopoulos in \cite{Va85}, meaning that
\begin{enumerate}
\item[(S7)](Varopoulos dimension) for all $f\in L^{p}(\cS)$, $1\leq p<\infty$ and $t>0$, there is a constant $c>0$ such that
\begin{eqnarray*}
\norm{T_{t}f}_{\infty}\leq c t^{-\frac{d}{2p}}\norm{f}_{p}.
\end{eqnarray*}
\end{enumerate}
For instance, the heat semigroup $e^{-t\Delta}$ on $\R^{d}$($d\geq 3$) has the Varopoulos dimension $d$. 

Given the symmetric Markovian semigroup, one can define the Poisson semigroup associated to $\{T_{t}\}_{t\geq0}$ in the following ways.
One can define the Poisson semigroup by means of the spectral decomposition on $L^{2}(\cS)$. 
For $f\in L^{2}(\cS)$, the semigroup $\{T_{t}\}_{t\geq0}$ can be written as
\begin{eqnarray*}
T_{t}f(x)=\int_{0}^{\infty}e^{-\lambda t}dE_{\lambda}f(x).
\end{eqnarray*}
Here, the family $\{E_{\lambda}:\lambda\geq0\}$ is the spectral resolution associated to the infinitesimal generator of $T_{t}$. 
Then the Poisson semigroup associated to the semigroup $\{T_{t}\}$ on $L^{2}$ is defined by
\begin{eqnarray*}
P_{t}f(x)=\int_{0}^{\infty}e^{-\lambda^{1/2} t}dE_{\lambda}f(x).
\end{eqnarray*}
The other way to introduce the Poisson semigroup is to subordinate the given semigroup $\{T_{t}\}_{t\geq0}$ (see S. Bochner \cite{Bo12}), which enables us to define the Poisson semigroup on $L^{p}$ for any $1\leq p\leq\infty$. 
To be specific, for $f\in L^{p}$, we define the Poisson semigroup by
\begin{eqnarray}\label{eq:bocpt}
P_{t}f(x)=\int_{0}^{\infty}T_{s}f(x)\mu_{t}(ds)
\end{eqnarray}
where $\mu_{t}(ds)=\frac{t}{2\sqrt{\pi}}e^{-t^{2}/4s}s^{-3/2}ds$ and $1\leq p\leq \infty$. 
One can easily verify by direct calculation that these two construction coincides when $p=2$. 
We notice that the construction of the Poisson semigroup by $\mu_{t}(ds)$ is a special case of the subordination.
Generally speaking, one obtains a new semigroup by subordinating with any convolution measure on $[0,\infty)$ (a L\'evy process on $[0,\infty)$ from a probabilistic point of view).
In \eqref{eq:bocpt}, we adopt the convolution measure $\mu_{t}(ds)$ called the $\frac{1}{2}$-stable subordinator.
From now on, we call $u_{f}(x,y):=P_{y}f(x)$ \emph{the harmonic extension} of $f$.

The following lemma tells us that the Poisson semigroup satisfies the same properties as $\{T_{t}\}_{t\geq0}$ to some extent. 
\begin{lemma}\label{lem:sms}
Let $\{T_{t}\}_{t\geq0}$ be a strongly continuous symmetric Markovian semigroup and $\{P_{t}\}_{t\geq0}$ the corresponding Poisson semigroup. 
Then the semigroup $\{P_{t}\}_{t\geq0}$ is also a strongly continuous symmetric Markovian semigroup. 
In addition, if the semigroup $\{T_{t}\}_{t\geq0}$ has the Varopoulos dimension $d$, then the dimension of the Poisson semigroup $\{P_{t}\}_{t\geq0}$ is $2d$, namely
\begin{eqnarray*}
\norm{P_{y}f}_{\infty}=\norm{u_{f}(\cdot,y)}_{\infty} \leq \frac{c}{y^{d/p}}\norm{f}_{p}.
\end{eqnarray*}
\end{lemma}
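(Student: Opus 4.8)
The plan is to work throughout with the subordination formula \eqref{eq:bocpt}, writing $P_tf=\int_0^\infty T_sf\,\mu_t(ds)$, and to transfer each of the defining properties (S1)--(S5) and the semigroup law from $\{T_s\}_{s\ge0}$ to $\{P_t\}_{t\ge0}$ by integrating against $\mu_t$. Everything rests on two elementary facts about $\{\mu_t\}_{t\ge0}$, both immediate from the explicit density $\mu_t(ds)=\frac{t}{2\sqrt\pi}e^{-t^2/4s}s^{-3/2}\,ds$ and a change of variables: (i) each $\mu_t$ is a probability measure on $[0,\infty)$ whose Laplace transform is $\int_0^\infty e^{-\lambda s}\,\mu_t(ds)=e^{-t\sqrt\lambda}$, so that $\mu_t*\mu_u=\mu_{t+u}$ and $\mu_0=\delta_0$; and (ii) the scaling identity $\mu_t(A)=\mu_1(t^{-2}A)$, equivalently $\int_0^\infty g(s)\,\mu_t(ds)=\int_0^\infty g(t^2u)\,\mu_1(du)$.

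First I would verify (S1)--(S4) and the semigroup property. Positivity (S1) and $P_t1=1$ (S2) are immediate from $\mu_t\ge0$ and $\mu_t([0,\infty))=1$. Symmetry (S3) follows from $\brk{P_tf,g}=\int_0^\infty\brk{T_sf,g}\,\mu_t(ds)=\int_0^\infty\brk{f,T_sg}\,\mu_t(ds)=\brk{f,P_tg}$, where Fubini is legitimate since $\int_\cS|T_sf(x)\,g(x)|\,dx\le\norm{f}_2\norm{g}_2$ for all $s$. The $L^p$-contraction (S4) is Minkowski's integral inequality combined with $\norm{T_sf}_p\le\norm{f}_p$. For the semigroup law one pulls the bounded operator $T_s$ through the Bochner integral defining $P_u$, uses $T_sT_r=T_{s+r}$, and recognizes $\int_0^\infty\int_0^\infty T_{s+r}f\,\mu_u(dr)\,\mu_t(ds)=\int_0^\infty T_vf\,(\mu_t*\mu_u)(dv)=\int_0^\infty T_vf\,\mu_{t+u}(dv)=P_{t+u}f$ after the change of variables $v=s+r$; the integrability needed for these interchanges is again the uniform bound $\norm{T_sf}_2\le\norm{f}_2$.

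For strong continuity (S5) I would estimate $\norm{P_tf-f}_2\le\int_0^\infty\norm{T_sf-f}_2\,\mu_t(ds)$ and split at a small $\delta>0$: the integral over $[0,\delta)$ is small because $\norm{T_sf-f}_2\to0$ as $s\to0$ by strong continuity of $\{T_s\}$, while the integral over $[\delta,\infty)$ is at most $2\norm{f}_2\,\mu_t([\delta,\infty))=2\norm{f}_2\,\mu_1([\delta t^{-2},\infty))$, which tends to $0$ as $t\to0$ since $\mu_1$ is a finite measure. (Alternatively, on $L^2$ one may use the spectral representation $P_tf=\int_0^\infty e^{-t\sqrt\lambda}\,dE_\lambda f$ and dominated convergence, since $|e^{-t\sqrt\lambda}-1|\le1$ and $\int_0^\infty d\brk{E_\lambda f,f}=\norm{f}_2^2<\infty$.)

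Finally, for the dimension estimate, combining (S7) for $\{T_s\}$ with the scaling (ii) gives
\[
\norm{P_yf}_\infty\le\int_0^\infty\norm{T_sf}_\infty\,\mu_y(ds)\le c\norm{f}_p\int_0^\infty s^{-d/2p}\,\mu_y(ds)=c\,\norm{f}_p\,y^{-d/p}\int_0^\infty u^{-d/2p}\,\mu_1(du),
\]
and the remaining integral equals $\frac{1}{2\sqrt\pi}\int_0^\infty u^{-d/2p-3/2}e^{-1/4u}\,du$, a convergent Gamma-type integral for every $p\ge1$ (the factor $e^{-1/4u}$ absorbs the singularity at $u=0$, and the exponent $-d/2p-3/2<-1$ gives convergence at $u=\infty$); this yields $\norm{P_yf}_\infty\le c\,y^{-d/p}\norm{f}_p$, i.e.\ Varopoulos dimension $2d$. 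The only genuinely delicate points are the justification of the Fubini/Bochner interchanges on the relevant $L^p$ spaces and of strong continuity; the rest is a routine transfer of properties through the positive, unit-mass, convolution-semigroup family $\{\mu_t\}$.
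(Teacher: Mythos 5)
Your proof is correct and follows essentially the same route as the paper: transfer the properties of $\{T_t\}$ to $\{P_t\}$ through the subordination formula \eqref{eq:bocpt}, using that $\mu_t$ is a probability measure, and obtain the Varopoulos dimension by the scaling $\mu_t(ds)\mapsto\mu_1(du)$ under $s=t^2u$. The only cosmetic differences are that you invoke Minkowski's integral inequality for the $L^p$-contraction where the paper uses Jensen's inequality pointwise (both valid, both resting on $\mu_t([0,\infty))=1$), and that you spell out the semigroup law and strong continuity in more detail than the paper's ``follows directly'' and ``in the same way.''
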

\begin{proof}
Let us firstly show that the Poisson semigroup is a strongly continuous symmetric Markovian semigroup. The assumptions (S1), (S2) and (S3) follow directly from the definition \eqref{eq:bocpt}. To see the $L^{p}$-contraction of $\{P_{y}\}_{y\geq0}$, we make use of the definition, Fubini's theorem and Jensen's inequality that
	\begin{eqnarray*}
	\norm{P_{y}f}_{p}^{p}
	&=&\int_{\cS}\abs{P_{y}f(x)}^{p}dx
	\leq\int_{\cS}\int_{0}^{\infty}\abs{T_{s}f(x)}^{p}\mu_{y}(ds)dx\\
	&=&\int_{0}^{\infty}\norm{T_{s}f}_{p}^{p}\mu_{y}(ds)
	\leq\norm{f}_{p}^{p}.
	\end{eqnarray*}
In the same way, one can show that the Poisson semigroup is strongly continuous on $L^{2}$. 
We see by using the assumption that $\{P_{y}\}_{y\geq0}$ has the dimension $d$ that
	\begin{eqnarray*}
	\abs{P_{y}f(x)}
	&=&\Abs{\int_{0}^{\infty}T_{s}f(x)\mu_{y}(ds)}
	\leq \int_{0}^{\infty}\abs{T_{s}f(x)}\mu_{y}(ds)\\
	&\leq&c\norm{f}_{p}\int_{0}^{\infty}s^{-\frac{d}{2p}}\mu_{y}(ds).
	\end{eqnarray*}
The direct calculation yields the integral in the last term is equal to $Cy^{-\frac{d}{p}}$, which implies that the Poisson semigroup has the dimension $2d$.
\end{proof}

It is well-known that the map $y\mapsto u_{f}(\cdot,y)$ is real-analytic for all $f\in L^{p}$, $1<p<\infty$. (See \cite[p.67, p.72]{St70t}.)
This observation is important in the present context because the representation of fractional integrals requires that the harmonic extension $u_{f}(x,y)$ in the general setting is differentiable with respect to $y$. 
Next lemma is concerned with a derivative estimate for the harmonic extension $u_{f}$. 
\begin{lemma}\label{lem:ptest}
Let $f$ be a bounded measurable function on $\cS$. We have the following estimate for the harmonic extension of $f$. 
\begin{eqnarray*}
\Abs{y\frac{\partial u_{f}}{\partial y}(x,y)}\leq c_{1}u_{|f|}(x,\frac{y}{\sqrt{2}})
\end{eqnarray*}
for some constant $c_{1}$ independent of $f$.
\end{lemma}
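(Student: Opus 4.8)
The plan is to differentiate the explicit subordination formula \eqref{eq:bocpt} for the harmonic extension directly in the height variable $y$, and then to dominate the resulting kernel pointwise by a constant multiple of the subordination kernel at height $y/\sqrt2$.

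Write $\mu_t(ds)=p_t(s)\,ds$ with $p_t(s)=\tfrac{t}{2\sqrt\pi}e^{-t^2/(4s)}s^{-3/2}$, so that \eqref{eq:bocpt} reads $u_f(x,y)=\int_0^\infty T_sf(x)\,p_y(s)\,ds$. Since $f$ is bounded, $\norm{T_sf}_\infty\le\norm f_\infty$ for all $s>0$ by (S4), and a short computation gives
\[
\frac{\partial p_y}{\partial y}(s)=\frac{1}{2\sqrt\pi}\,s^{-3/2}\,e^{-y^2/(4s)}\Paren{1-\frac{y^2}{2s}} ,
\]
whose absolute value, for $y$ in any fixed compact subinterval of $(0,\infty)$, is dominated by a single $s$-integrable function on $(0,\infty)$: the Gaussian factor absorbs the $s^{-5/2}$ growth as $s\to 0$, while the whole expression is $O(s^{-3/2})$ as $s\to\infty$. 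Hence one may differentiate under the integral sign; moving the absolute value inside the integral and using that $T_s$ is positivity preserving by (S1), so that $\abs{T_sf}\le T_s\abs f$, yields
\[
\Abs{y\,\frac{\partial u_f}{\partial y}(x,y)}\le\int_0^\infty T_s\abs f(x)\,\frac{y}{2\sqrt\pi}\,s^{-3/2}\,e^{-y^2/(4s)}\Abs{1-\frac{y^2}{2s}}\,ds .
\]

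The proof is then completed by the pointwise kernel estimate
\[
\frac{y}{2\sqrt\pi}\,s^{-3/2}\,e^{-y^2/(4s)}\Abs{1-\frac{y^2}{2s}}\le c_1\,p_{y/\sqrt2}(s),\qquad y,s>0 .
\]
Since $p_{y/\sqrt2}(s)=\tfrac{y}{2\sqrt2\,\sqrt\pi}s^{-3/2}e^{-y^2/(8s)}$, this is equivalent to $\sqrt2\,e^{-y^2/(8s)}\abs{1-\tfrac{y^2}{2s}}\le c_1$, and substituting $r=y^2/(2s)\ge0$ reduces it to the statement that $r\mapsto\sqrt2\,e^{-r/4}\abs{1-r}$ is bounded on $[0,\infty)$, which is clear since it is continuous and tends to $0$ at infinity (its maximum is attained at $r=5$, so $c_1=4\sqrt2\,e^{-5/4}$ works). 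Combining the last two displays and recognising $\int_0^\infty T_s\abs f(x)\,p_{y/\sqrt2}(s)\,ds=u_{\abs f}(x,y/\sqrt2)$ gives $\Abs{y\,\partial u_f/\partial y\,(x,y)}\le c_1\,u_{\abs f}(x,y/\sqrt2)$, as asserted.

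The only genuine technical point — and it is routine — is the justification of the interchange of $\partial_y$ and the $s$-integral; one could instead first prove the identity for $y\,\partial_y u_f$ on $f\in C_0(\cS)$, where $(s,x)\mapsto T_sf(x)$ is well behaved and the interchange is transparent, and then invoke dominated convergence, but the direct domination above is cleaner. Everything else is the elementary one-variable estimate on $e^{-r/4}\abs{1-r}$.
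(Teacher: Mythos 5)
Your proof is correct and follows essentially the same route as the paper's: differentiate Bochner's subordination kernel $\mu_y(ds)$ in $y$, pass $|T_sf|\le T_s|f|$ inside, and dominate the resulting kernel pointwise by $c_1\,\mu_{y/\sqrt2}(ds)$ via the elementary bound on $e^{-r/4}|1-r|$. The only difference is cosmetic: you spell out the justification for differentiating under the integral sign and compute the optimal constant $c_1=4\sqrt2\,e^{-5/4}$, whereas the paper states the inequality $|1-\tfrac{y^2}{2s}|\le c_1 e^{y^2/8s}$ without optimizing.
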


\begin{proof}
It is verified by using Bochner's subordination. 
To be specific, if we write $\mu_{y}(ds)=\frac{1}{2\sqrt{\pi}}\eta_{y}(s)ds$, we have
\begin{eqnarray*}
y\frac{\partial\eta_{y}(s)}{\partial y}=(1-\frac{y^{2}}{2s})ye^{-y^{2}/4s}s^{-3/2}.
\end{eqnarray*}
Note that there exists a constant $c_{1}$ such that $\abs{1-\frac{y^{2}}{2s}}\leq c_{1}e^{y^{2}/8s}$ for every $y>0$ and $s>0$. It then follows that
\begin{eqnarray*}
\Abs{y\frac{\partial \eta_{y}}{\partial y}(s)}\leq c_{1}ye^{-y^{2}/8s}s^{-3/2}=c_{1}\eta_{\frac{y}{\sqrt{2}}}(s)
\end{eqnarray*}
for every $y>0$ and $s>0$. As a result, one can complete the result by passing through the differentiation into the integral.
\end{proof}

Let $A_{T}$ and $A_{P}$ be the infinitesimal generators of $\{T_{t}\}_{t\geq0}$ and $\{P_{t}\}_{t\geq0}$ respectively. 
One can see that the generators have the formal relation $A_{P}=-(-A_{T})^{\frac{1}{2}}$. 
We set $R_{0}=\{f,A_{T}(f)\in\dom(A_{T})\}$ and $R_{n}=\cap_{k=1}^{n}\dom(A_{P}^{k})$. 
If $1\leq k\leq n$ and $f\in R_{n}$, then the $\frac{\partial^{k}}{\partial y^{k}}u_{f}$ belongs to the space $R_{n-k}$. 
Since $\{T_{t}\}_{t\geq0}$ and $\{P_{t}\}_{t\geq0}$ are Feller semigroups, the space $R_{n}$ is contained in $C_{0}(\cS)$ for every $n\geq0$, which implies that  $R_{n}$ is dense in $L^{p}$ for every $p\geq 1$ and every $n\geq 0$. 
This observation and the density argument enable us to restrict our attention to $C_{0}(\cS)$ in what follows.
We refer the reader to \cite[p.29]{Va80} and \cite[Chap IV \S10,\S11]{Yo71} for further discussion.

We review the celebrated inequalities for general semigroups by E. M. Stein in his 1970 monograph \cite{St70t}. 
The first inequality is the maximal ergodic theorem that plays a fundamental role in the proof of Theorem \ref{lem1}. 
In \cite{St70t}, the author gives two different proofs. 
One is to use the Hopf-Dunford-Schwartz ergodic theorem with an interpolation argument and the other is to utilize the martingale inequalities via a result of Rota \cite{Ro62} which sheds new light on the link between semigroups and martingales.  
For the completeness of the paper, we provide the continuous martingale version of the second proof, which can be found in I. Shigekawa \cite{Sh02}.

\begin{proposition}[Maximal ergodic theorem]\label{prop:stmax}
The harmonic extension of $f$ satisfies the following maximal inequality:
\begin{eqnarray*}
\norm{\sup_{y>0}|u_{f}(\cdot,y)|}_{p}\leq \frac{p}{p-1}\norm{f}_{p}
\end{eqnarray*}
for all $f\in L^{p}$ and for all $1<p\leq\infty$.  When $p=\infty$, we interpret the constant to be $1$. 
\end{proposition}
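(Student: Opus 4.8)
The plan is to derive the inequality from the maximal ergodic theorem for a general strongly continuous symmetric Markovian semigroup. Since, by Lemma~\ref{lem:sms}, the Poisson semigroup $\{P_y\}_{y\ge0}$ is itself a strongly continuous symmetric Markovian semigroup and $u_f(\cdot,y)=P_yf$, it suffices to establish
\[
\Bigl\|\sup_{t>0}|T_tf|\Bigr\|_p\le\frac{p}{p-1}\,\|f\|_p,\qquad 1<p\le\infty,
\]
for an arbitrary such semigroup $\{T_t\}_{t\ge0}$ on $(\cS,dx)$; equivalently, I could first prove this for $\{T_t\}$ and then invoke that each $\mu_y$ is a probability measure, which gives the pointwise bound $\sup_{y>0}|u_f(x,y)|\le\sup_{t>0}T_t|f|(x)$, together with $\||f|\|_p=\|f\|_p$. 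The case $p=\infty$ is immediate from (S1)--(S2), since $|T_tf|\le T_t|f|\le\|f\|_\infty$; and because $t\mapsto T_tf(x)$ is continuous, $\sup_{t>0}|T_tf(x)|$, which equals the supremum over a countable dense set of $t$, is measurable.

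For $1<p<\infty$ the heart of the argument will be a reversed-martingale representation of the semigroup going back to Rota~\cite{Ro62} (see \cite[Ch.~IV]{St70t}), realized with continuous-time martingales following Shigekawa~\cite{Sh02}. After the routine exhaustion reducing the reference measure $dx$ to a probability measure, I would use the Markov process $(\xi_t)_{t\ge0}$ associated with $\{T_t\}$ and started from the stationary law $dx$ --- it is reversible by the symmetry (S3) and admits a c\`adl\`ag version by the Feller property (S6) --- together with the identity $T_{2t}=T_t\circ T_t$, to produce, \emph{uniformly in} $t$, a probability space $(\Omega,\cF,\P)$, a \emph{decreasing} filtration $\{\cM_t\}_{t\ge0}\subset\cF$, and a random variable $G\in L^p(\Omega)$ with $\|G\|_{L^p(\Omega)}=\|f\|_{L^p(\cS,dx)}$, such that the process $t\mapsto\E[G\mid\cM_t]$ has the same law as the process $t\mapsto T_{2t}f$ regarded as a random variable on $(\cS,dx)$. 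Thus $\{T_{2t}f\}_{t\ge0}$ is, in law, a reversed martingale whose terminal variable has $L^p$-norm $\|f\|_p$, and Doob's maximal inequality for reversed martingales in continuous time --- valid with the sharp constant $p/(p-1)$ for $1<p<\infty$ --- will yield
\begin{align*}
\Bigl\|\sup_{t>0}|T_tf|\Bigr\|_p
&=\Bigl\|\sup_{t>0}|T_{2t}f|\Bigr\|_p
=\Bigl\|\sup_{t>0}\bigl|\E[G\mid\cM_t]\bigr|\Bigr\|_p\\
&\le\frac{p}{p-1}\,\|G\|_p=\frac{p}{p-1}\,\|f\|_p,
\end{align*}
the first equality being just the bijection $t\mapsto 2t$ of $(0,\infty)$.

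The step I expect to be the main obstacle is the reversed-martingale realization itself. Rota's construction has to be arranged so that it holds \emph{simultaneously for all} $t>0$ with respect to a \emph{single} decreasing filtration --- this is exactly what the doubling $T_{2t}f=T_t(T_tf)$ provides, by exhibiting $T_{2t}f$ as a two-step conditional expectation of one fixed terminal variable --- and it must be upgraded to a continuous-parameter statement with the correct path regularity, with care taken when $dx$ is only $\sigma$-finite rather than finite. Once this representation is available, the remaining ingredients --- the reduction to $\{T_t\}$ (or the elementary domination by $\sup_{t>0}T_t|f|$), the bijection $t\leftrightarrow 2t$, and Doob's inequality with its optimal constant $p/(p-1)$ --- are routine.
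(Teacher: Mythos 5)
Your proposal follows essentially the same route as the paper: both invoke Rota's double-conditioning trick (realized with continuous-time martingales following Shigekawa \cite{Sh02}), using the doubling identity $Q_{2t}=Q_t\circ Q_t$, reversibility of the associated Markov process, and Doob's maximal inequality to obtain the sharp constant $p/(p-1)$, and both then apply the general-semigroup statement to $\{P_y\}$ via Lemma~\ref{lem:sms}. The only presentational difference is that the paper works on a fixed horizon $[0,T]$ with the forward martingale $t\mapsto Q_{T-t}f(X_t)$ and the identity $Q_{2(T-t)}f(X_T)=\E^x[Q_{T-t}f(X_{2T-t})\mid\cF_T]$, letting $T\to\infty$ at the end, whereas you propose to build a single decreasing filtration exhibiting $t\mapsto T_{2t}f$ as a reversed martingale; the substance is identical.
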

\begin{proof}
We prove the result for general symmetric Markovian semigroup $\{Q_{t}\}_{t\geq0}$. 
Let  $\{X_{t}\}_{t\geq0}$ be the stochastic process corresponding to $\{Q_{t}\}_{t\geq0}$, that is, $Q_{t}f(x)=\E^{x}[f(X_{t})]$ for any $f\in L^{p}$. We assume $1<p<\infty$ as $p=\infty$ is trivial. 
Let $T>0$ be fixed and $\{\cF_{t}:t\geq0\}$ the natural filtration of $\{X_t\}$. 
Using the Markov property and the semigroup property, we have
\begin{eqnarray*}
Q_{2(T-t)}f(X_{T})
&=&Q_{T-t}(Q_{T-t}f)(X_{T})\\
&=&\E^{X_{T}}[Q_{T-t}f(X_{T-t})]\\
&=&\E^{x}[Q_{T-t}f(X_{2T-t})|\cF_{T}].
\end{eqnarray*}
It then follows from Jensen's inequality that 
\begin{eqnarray*}
\sup_{0\leq t\leq T}\abs{Q_{2(T-t)}f(X_{T})}^{p}\leq \E^{x}[\sup_{0\leq t\leq T}\abs{Q_{T-t}f(X_{2T-t})}^p|\cF_{T}]
\end{eqnarray*}
and in turn by taking expectation on both sides and integrating over $\cS$ with respect to $dx$ that
\begin{eqnarray}\label{maxeq1}
\int_{\cS}\E^{x}[\sup_{0\leq t\leq T}\abs{Q_{2(T-t)}f(X_{T})}^{p}]dx
&\leq&\int_{\cS}\E^{x}[\sup_{0\leq t\leq T}\abs{Q_{T-t}f(X_{2T-t})}^{p}]dx\nonumber\\
&=&\int_{\cS}\E^{x}[\sup_{0\leq t\leq T}\abs{Q_{T-t}f(X_{t})}^{p}]dx.
\end{eqnarray}
We have used the reversibility of the process $X_{t}$ in the last line. 
Note that the process $Q_{T-t}f(X_{t}^{x})$ is a martingale. This is because 
\begin{eqnarray*}
Q_{T-t}f(X_{t})=\E^{x}[f(X_{T})|\cF_{t}].
\end{eqnarray*}
Then the Doob's maximal inequality yields that
\begin{eqnarray}\label{maxeq2}
\E^{x}[\sup_{0\leq t\leq T}\abs{Q_{T-t}f(X_{T})}^{p}]\leq\Paren{\frac{p}{p-1}}^{p}\E^{x}[|f(X_{T})|^{p}].
\end{eqnarray}
It follows from the self-adjointness and the invariant property $Q_{t}1=1$ that
\begin{eqnarray*}
\int_{\cS}\E^{x}[g(X_{T})]dx=\int_{\cS}Q_{T}g(x)dx=\int_{\cS}g(x)dx
\end{eqnarray*}
for any bounded measurable function $g$.
Applying this to \eqref{maxeq1} and \eqref{maxeq2} yields 
\begin{eqnarray*}
\norm{\sup_{0\leq t\leq T}|Q_{2(T-t)}f(x)|}_{p}
&\leq&\Paren{\int_{\cS}\E^{x}[\sup_{0\leq t\leq T}\abs{Q_{T-t}f(X_{t})}^{p}]dx}^{\frac{1}{p}}\\
&\leq&\frac{p}{p-1}\Paren{\int_{\cS}\E^{x}[|f(X_{T})|^{p}]dx}^{\frac{1}{p}}\\
&=&\frac{p}{p-1}\norm{f}_{p}.
\end{eqnarray*}
Since the RHS does not depend on $T$, we let $T\to\infty$ to complete the proof. 
\end{proof}

We end the subsection by stating the Littlewood-Paley inequality for symmetric Markovian semigroups in \cite{St70t} that will be a key estimate in the proof of Theorem \ref{thm1}. 
For a function $f\in L^{p}$, we define the Littlewood-Paley $g_k$--function by
\begin{eqnarray*}
g_{k}(f)(x)=\Paren{\int_{0}^{\infty}y^{2k-1}\Abs{\frac{\partial^{k}u_{f}}{\partial y^{k}}(x,y)}^{2}dt}^{1/2}, 
\end{eqnarray*}
for each $k\geq1$.
 
\begin{proposition}\label{prop:gfunc}
Let $1<p<\infty$ and $k\geq1$.
If $f\in L^{p}$, then the Littlewood-Paley function $g_{k}(f)$ is in $L^{p}$ as well and satisfies the inequality
\begin{eqnarray*}
\norm{g_{k}(f)}_{p}\leq C_{p,k}\norm{f}_{p}
\end{eqnarray*}
for some constant $C_{p,k}$ depending only on $p$ and $k$.
\end{proposition}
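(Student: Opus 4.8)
The plan is to follow Stein's classical argument \cite{St70t}: establish the estimate first for $p=2$ by the spectral theorem, and then reach general $p$ through the martingale interpretation of the $g$--function combined with the Burkholder--Davis--Gundy inequalities. For $p=2$ the estimate is in fact an identity. One differentiates the spectral representation $u_{f}(x,y)=\int_{0}^{\infty}e^{-\sqrt{\lambda}\,y}\,dE_{\lambda}f(x)$ $k$ times in $y$ to get $\frac{\partial^{k}u_{f}}{\partial y^{k}}(x,y)=(-1)^{k}\int_{0}^{\infty}\lambda^{k/2}e^{-\sqrt{\lambda}\,y}\,dE_{\lambda}f(x)$, so that $\int_{\cS}\Abs{\frac{\partial^{k}u_{f}}{\partial y^{k}}(x,y)}^{2}dx=\int_{0}^{\infty}\lambda^{k}e^{-2\sqrt{\lambda}\,y}\,d\norm{E_{\lambda}f}_{2}^{2}$. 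Multiplying by $y^{2k-1}$, integrating in $y$, and evaluating $\int_{0}^{\infty}y^{2k-1}e^{-2\sqrt{\lambda}\,y}\,dy=\Gamma(2k)\,4^{-k}\lambda^{-k}$ yields $\norm{g_{k}(f)}_{2}^{2}=\frac{\Gamma(2k)}{4^{k}}\norm{(I-E_{\{0\}})f}_{2}^{2}\leq\frac{\Gamma(2k)}{4^{k}}\norm{f}_{2}^{2}$; the spectral projection onto the null space of the generator drops out because $\partial_{y}^{k}$ annihilates $y$--independent functions.

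For $p\neq 2$ one exploits that $g_{k}(f)$ is comparable to the square function of a martingale. By Rota's dilation theorem --- the device already used in the proof of Proposition \ref{prop:stmax} --- the subordinated semigroup $\{P_{y}\}_{y\geq0}$ can be realized through conditional expectations along a filtration, so that $\frac{\partial^{k}u_{f}}{\partial y^{k}}$ appears, in a suitably averaged sense, as an increment of a martingale $M$ with $\int_{0}^{\infty}y^{2k-1}\Abs{\frac{\partial^{k}u_{f}}{\partial y^{k}}(x,y)}^{2}dy$ comparable to the quadratic variation $[M]_{\infty}$. Since the Burkholder--Davis--Gundy inequalities compare $\norm{[M]_{\infty}^{1/2}}_{p}$ and $\norm{M_{\infty}}_{p}$ for every $1<p<\infty$, and $\norm{M_{\infty}}_{p}\leq c\norm{f}_{p}$, one obtains $\norm{g_{k}(f)}_{p}\leq C_{p,k}\norm{f}_{p}$. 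The reduction from $k=1$ to general $k$ is classical (see \cite[Ch.~IV]{St70t}) and may also be absorbed into the martingale estimate; alternatively, the argument can be run with the space--time process $Z_{t}$ of \S\ref{sec:stocproc} and the projection theorem stated there, using that $\partial_{y}^{k-1}u_{f}$ is space--time harmonic, in the spirit of the proof of Theorem \ref{thm1}.

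The hard part will be precisely this passage to $p\neq 2$: one must dominate $g_{k}(f)$ by a genuine martingale square function, which is exactly where Rota's representation (or the space--time process together with the projection theorem) is essential, the $L^{p}$ bound then being a black-box application of BDG. A secondary technical point is that when $dx$ is an infinite measure --- e.g.\ Lebesgue measure on $\R^{d}$ --- Rota's theorem does not apply verbatim, so one first localizes to sets of finite measure, proves the bound there with constants independent of the exhaustion, and passes to the limit. Since Proposition \ref{prop:gfunc} is exactly Stein's theorem, one could of course simply invoke \cite[Ch.~IV]{St70t}; the above merely records the route one would take to reprove it.
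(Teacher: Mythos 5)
The paper itself gives no proof of Proposition \ref{prop:gfunc}: it simply invokes Stein's monograph and moves on, so you are reconstructing an argument the paper treats as a black box rather than competing with one it actually records. With that understood, your $p=2$ computation is correct and clean --- the spectral representation, the evaluation $\int_0^\infty y^{2k-1}e^{-2\sqrt\lambda\,y}\,dy=\Gamma(2k)\,4^{-k}\lambda^{-k}$ for $\lambda>0$, and the disappearance of the spectral atom at $\lambda=0$ all check, giving the identity $\norm{g_k(f)}_2^2=\Gamma(2k)4^{-k}\norm{(I-E_{\{0\}})f}_2^2$. Your identification of Rota's dilation together with martingale square-function estimates as the engine for $p\neq 2$ is also the right framework and agrees with the paper's own remark that Stein's $g$--function theorem ``is based on the Burkholder--Davis--Gundy inequalities for discrete martingales.''

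Two steps are glossed over, however, and they are exactly where the real work lies. First, the assertion that ``$\partial_y^k u_f$ appears, in a suitably averaged sense, as an increment of a martingale $M$ with $g_k(f)^2$ comparable to $[M]_\infty$'' is the crux of Stein's proof and is not a one-line consequence of Rota's theorem plus a black-box BDG; one must discretize the $y$-integral and run a genuine comparison argument between the continuous-parameter $g$--function and a discrete martingale square function. Second, the alternative route via the space--time process $Z_t$ and Proposition \ref{prop:proj} has two concrete obstructions: for $k\geq 2$ the space--time harmonic function $\partial_y^{k-1}u_f$ has boundary trace $A_P^{k-1}f$, which need not belong to $L^p$ when $f$ does, so the projection theorem cannot be invoked for it directly; and even for $k=1$, what the probabilistic machinery produces naturally is a conditioned (Lusin-area-type) square function, not $g_1(f)$ itself, and the $L^p$ comparison between the two for $p\neq 2$ requires more than Jensen's inequality (Jensen gives one direction only for $p\geq 2$; the range $1<p<2$ needs a good-$\lambda$ or duality argument, as developed at length in \cite{Va80}). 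Your caution about the infinite-mass case and the localization of Rota's theorem is well placed and correctly handled in spirit.
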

In the following, we only use the Littlewood-Paley $g$--function for $k=1$. 
We refer the reader to \cite[p.111, p.120]{St70s} for the detail.

\subsection{On stochastic processes}\label{sec:stocproc}
We define a stochastic process on the space $\cS\times\R$ associated to the strongly continuous symmetric Markovian semigroup $\{T_{t}\}_{t\geq0}$ of dimension $d$ as defined above. 
Let $H_{t}$ be the heat semigroup on $\R$ defined by
\begin{eqnarray*}
H_{t}f(x)=\frac{1}{(2\pi t)^{1/2}}\int_{\R}e^{-\frac{(x-y)^{2}}{2t}}f(y)dy.
\end{eqnarray*}
Given the product semigroup $\{T_{t}\times H_{t}\}_{t\geq0}$, one can construct the corresponding stochastic process $Z_{t}=(X_{t},Y_{t})\in\cS\times\R$ on the probability space $(\Omega,\sF,\P)$, whose paths are right-continuous with left limits and whose components, $X_{t}$ and $Y_{t}$, are independent each other. 
For example, if we restrict our attention to the standard heat semigroup on $\R^{n}$, the $(n+1)$-dimensional Brownian motion is the corresponding stochastic process.
From this point of view, the stochastic process $(Z_{t})_{t\geq0}$ is an analogue of a Brownian motion in $\cS\times \R$.
In addition, we assume that the stochastic process is to be killed when it leaves the upper half space $\cS\times [0,\infty)$. 
In other words, let $\tau:=\inf\{t\geq0:Y_{t}=0\}$ be the hitting time of $Y_{t}$ at 0 and consider the killed process $(Z_{t\wedge \tau})_{t\geq0}$ instead.

Let $s>0$ be fixed. 
We assume that the initial distribution of the stochastic process $(Z_{t})_{t\geq0}$ is given by $dx\otimes\delta_{s}$ where $\delta_{s}$ is the Dirac delta measure at fixed $s>0$. 
To put it another way, one can describe that the process $(Z_{t})_{t\geq0}$ starts at $(x_{0},s)\in\cS\times\R$ where $x_{0}$ is randomly chosen with respect to the measure $dx$. 
The probability and expectation of $Z_{t}$ with the initial distribution are denoted by $\cE$ and $\cP$ respectively. 
Explicitly, $\cE$ and $\cP$ are written as
\begin{eqnarray*}
\cE=\int_{\cS}\E^{(x,s)}dx,\quad \cP=\int_{\cS}\P^{(x,s)}dx.
\end{eqnarray*}
One thing to remark here is that it is possible for $\cP$ not to be a probability measure because the total mass $\int_{\cS}1dx$ do not have to be 1. 
However, as explained in \cite{Va80}, all the result from probability theory connected with this context remain valid.

\begin{lemma}\label{lem:ztfor}
\begin{enumerate}[$(i)$]
\item
For any function $h\in L^{1}(\cS)$, we have
\begin{eqnarray*}
\cE[h(X_{\tau})]=\int_{\cS}h(x)dx.
\end{eqnarray*}
\item For a Borel measurable function $f$ on $\cS\times\R$, we have the Green function formula for $Z_{t}$:
\begin{eqnarray}\label{eq:ztfor2}
\cE[\int_{0}^{\tau}f(Z_{t})dt]=2\int_{0}^{\infty}\int_{\cS}(y\wedge s)f(x,y)dxdy.
\end{eqnarray}
\end{enumerate}
\end{lemma}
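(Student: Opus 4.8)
The plan is to exploit the product structure $Z_t=(X_t,Y_t)$, in which $X$ and $Y$ are independent, so that both identities collapse onto the invariance relation $\int_{\cS}T_tg\,dx=\int_{\cS}g\,dx$ together with a classical one-dimensional Green function computation. The invariance relation holds for every $g\ge0$ (and every $g\in L^1(\cS)$) because $\langle T_tg,1\rangle=\langle g,T_t1\rangle=\langle g,1\rangle$ by (S2)--(S3); it was already used in the proof of Proposition~\ref{prop:stmax}. I will first prove the identities for nonnegative $h$ and $f$, where every use of Fubini--Tonelli is legitimate and the possible infiniteness of $\int_{\cS}1\,dx$ is harmless since $\cP=\int_{\cS}\P^{(x,s)}\,dx$ enters linearly; the general integrable case then follows by splitting into positive and negative parts.

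For $(i)$, note that $Y$ is a genuine Brownian motion started at $s>0$, so $\tau<\infty$ almost surely and $\tau$ is measurable with respect to $\cG:=\sigma(Y_t:t\ge0)$. Since $X$ is independent of $Y$ and has transition semigroup $\{T_t\}$, conditioning on $\cG$ gives $\E^{(x,s)}[h(X_\tau)\mid\cG]=(T_\tau h)(x)$ almost surely. Taking expectations, integrating in $x$, interchanging, and invoking invariance,
\begin{eqnarray*}
\cE[h(X_\tau)]
=\int_{\cS}\E\bigl[(T_\tau h)(x)\bigr]\,dx
=\E\Bigl[\int_{\cS}(T_\tau h)(x)\,dx\Bigr]
=\E\Bigl[\int_{\cS}h(x)\,dx\Bigr]
=\int_{\cS}h(x)\,dx,
\end{eqnarray*}
where the outer $\E$ refers to the law of $\tau$.

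For $(ii)$, write $\int_0^\tau f(Z_t)\,dt=\int_0^\infty f(X_t,Y_t)\mathbf{1}_{\{t<\tau\}}\,dt$, apply $\cE$ and Fubini, and use that $X_t$ is independent of $(Y_t,\mathbf{1}_{\{t<\tau\}})$: for each fixed $t$,
\begin{eqnarray*}
\int_{\cS}\E^{(x,s)}\bigl[f(X_t,Y_t)\mathbf{1}_{\{t<\tau\}}\bigr]\,dx
=\E_Y^s\Bigl[\mathbf{1}_{\{t<\tau\}}\int_{\cS}\bigl(T_tf(\cdot,Y_t)\bigr)(x)\,dx\Bigr]
=\E_Y^s\bigl[\mathbf{1}_{\{t<\tau\}}F(Y_t)\bigr],
\end{eqnarray*}
where $F(y):=\int_{\cS}f(x,y)\,dx$ and I used invariance again on $x\mapsto f(x,y)$. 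Integrating in $t$ reduces the lemma to the one-dimensional statement $\E_Y^s[\int_0^\tau F(Y_t)\,dt]=\int_0^\infty G(s,y)F(y)\,dy$, with $G(s,y)=\int_0^\infty p_t^0(s,y)\,dt$ the Green function of standard Brownian motion on $(0,\infty)$ absorbed at $0$ and $p_t^0(s,y)=(2\pi t)^{-1/2}\bigl(e^{-(s-y)^2/2t}-e^{-(s+y)^2/2t}\bigr)$ by the reflection principle. Inserting a factor $e^{-\lambda t}$, using $\int_0^\infty(2\pi t)^{-1/2}e^{-a^2/2t-\lambda t}\,dt=(2\lambda)^{-1/2}e^{-|a|\sqrt{2\lambda}}$, and letting $\lambda\downarrow0$ (with $(s+y)-|s-y|=2(s\wedge y)$) gives $G(s,y)=2(s\wedge y)$, which is exactly \eqref{eq:ztfor2}.

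The only steps requiring care are the conditional-expectation identity $\E[h(X_\tau)\mid\cG]=(T_\tau h)(x)$ and the attendant Fubini interchanges; the sole genuine computation is the evaluation of the absorbed one-dimensional Green function, which is classical. I do not anticipate a real obstacle: the whole force of the lemma is that the product/independence structure decouples the possibly ill-behaved semigroup $\{T_t\}$ from the Brownian factor, leaving only elementary one-dimensional Brownian facts.
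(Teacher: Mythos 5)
Your proof is correct, and it is worth comparing with the paper's. For part~$(i)$ the paper takes a shorter route: it observes that $\E^{(x,s)}[h(X_\tau)] = P_s h(x)$ --- i.e., the hitting distribution of $Z$ starting from height $s$ \emph{is} the Poisson kernel, which is precisely why $\{P_y\}$ was introduced via the $\tfrac12$-stable subordinator --- and then concludes from $\langle P_s h, 1\rangle = \langle h, P_s 1\rangle = \int h\,dx$. You instead condition on $\cG = \sigma(Y)$ to get $\E^{(x,s)}[h(X_\tau)\mid\cG] = T_\tau h(x)$ and use the invariance $\int T_t h\,dx = \int h\,dx$ pathwise in $\tau$. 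The two arguments are equivalent (integrating your conditional identity against the law of $\tau$ \emph{is} Bochner subordination, so $\E[T_\tau h] = P_s h$), but the paper's version makes the role of $P_s$ explicit, which is thematically better aligned with the rest of the construction. For part~$(ii)$ the paper simply cites Varopoulos~\cite[Prop.~3.1]{Va80}, so your self-contained derivation --- decoupling via independence and invariance to reduce to the one-dimensional absorbed Green function, then computing $G(s,y)=2(s\wedge y)$ by Laplace transform and the reflection principle --- is a genuine addition, and it is carried out correctly (the $\lambda\downarrow 0$ limit and the identity $(s+y)-|s-y|=2(s\wedge y)$ are both right). The only caveat, which is really the paper's issue rather than yours: the normalization of $H_t$ (generator $\tfrac12\partial_y^2$, giving $G(s,y)=2(s\wedge y)$) is what makes \eqref{eq:ztfor2} hold, but it is not quite consistent with the subordinator $\mu_t$ used to define $P_y$, which corresponds to a Brownian motion run at twice that speed; your computation is faithful to the convention under which the stated constant $2$ is correct.
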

\begin{proof}
We see by the facts $P_{t}1=1$ and $\langle P_{s}f, g\rangle=\langle f, P_{s}g\rangle$ that
\begin{eqnarray*}
\cE[h(X_{\tau})]
&=&\int_{\cS}\E^{(x,s)}h(X_{\tau})dx
=\int_{\cS}P_{s}h(x)dx\\
&=&\langle P_{s}h, 1\rangle
=\langle h, P_{s}1\rangle
=\int_{\cS}h(x)dx.
\end{eqnarray*}
For the proof of (ii), we refer the reader to \cite[Proposition 3.1]{Va80}. 
\end{proof}

We now proceed to define the stochastic integrals with respect to $\cP$. Consider a stochastic process $(A_{t})_{t\geq0}$ satisfying the following conditions: 
\begin{enumerate}[(i)]
\item the map $A:\Omega\times[0,\infty)\to\R$ is jointly measurable,
\item $A_{t}\in\sF_{t}$ for every $t\in[0,\infty)$,
\item $\cE[\int_{0}^{\infty}\abs{A_{t}}^{2}dt]<\infty.$
\end{enumerate}
We denote by $(A_{t})_{t\geq0}\in L^{2}(\Omega,\cP)$. Given such a process $(A_{t})_{t\geq0}$, we define the stochastic integral against $(Y_{t})_{t\geq0}$
\begin{eqnarray*}
I(A)_{t}:=\int_{0}^{t}A_{s}dY_{s}
\end{eqnarray*}
in a canonical way. 

If the ``probability'' $\cP$ is finite, then one can define $I(A)_{t}$ as a $L^{2}$-limit of martingale transforms with the aid of the It\^o's isometry.
In the case where $\cP$ is infinite, we decompose the Radon measure $dx$ into a countable family of finite measures $dx_{n}$, define the stochastic integral for each finite measure $dx_{n}$ and let the integral for whole space as the sum of integrals. 
The finiteness of the sum is assured by the third assumption of $(A_{t})_{t\geq0}$. 
We refer the reader to \cite[pp.37-38]{Va80} where the detail of the construction is presented. 

We are ready to state the \emph{the projection lemma}, which provides an effective tool to handle stochastic integrals in the proof of Theorem \ref{thm1}. 
This is an analogue of It\^o's formula for the $(d+1)$-dimensional Brownian motion.
The theorem says that the $(Y_{t})$-directional component of $u_{f}(Z_{t\wedge \tau})$ can be represented as a stochastic integral against $Y_{t}$. 
We will just state the theorem without proof because the proof is fairly lengthy. 
Instead we refer the reader to \cite[pp.50-59]{Va80} and other references therein. 

Let $V$ be the set of all stochastic processes in $L^{2}(\Omega,\cP)$ which is of the form $(I(A)_{t})_{t\geq0}$ as defined above. One sees easily that $V$ is a closed subspace. Let $\proj_{V}$ be the orthogonal projection from $L^{2}(\Omega,\cP)$ onto $V$. Then the projection theorem asserts as follows.
\begin{proposition}\label{prop:proj}
If $f\in R_{5}$, then 
\begin{eqnarray*}
\proj_{V}(u_{f}(Z_{t\wedge \tau})-u_{f}(Z_{0}))=\int_{0}^{t\wedge\tau}\frac{\partial u_{f}}{\partial y}(Z_{s})dY_{s}.
\end{eqnarray*}
\end{proposition}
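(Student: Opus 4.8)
The plan is to prove the identity of Proposition~\ref{prop:proj} by the Gundy--Varopoulos scheme, reading it as an orthogonal--projection identity in the Hilbert space $L^{2}(\Omega,\cP)$. Fix $t>0$ and $f\in R_{5}$. Since $\proj_{V}$ is the orthogonal projection onto $V$, it is enough to verify (a) that $J:=\int_{0}^{t\wedge\tau}\frac{\partial u_{f}}{\partial y}(Z_{s})\,dY_{s}$ is a genuine element of $V$, and (b) that $u_{f}(Z_{t\wedge\tau})-u_{f}(Z_{0})-J$ is orthogonal to $V$. I would first observe that every member of $V$ is a stochastic integral against $Y$ issued from $0$, hence orthogonal in $L^{2}(\Omega,\cP)$ to any $\sF_{0}$--measurable random variable; in particular $u_{f}(Z_{0})$ is orthogonal to $V$, so carrying it along in the statement is harmless.

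For (a) the only point requiring work is the square--integrability condition (iii) in the definition of the stochastic integral, namely $\cE\!\left[\int_{0}^{\tau}\left|\frac{\partial u_{f}}{\partial y}(Z_{s})\right|^{2}ds\right]<\infty$ (joint measurability and adaptedness of the integrand are automatic, since $f\in R_{5}$ makes $\frac{\partial u_{f}}{\partial y}=A_{P}u_{f}$ jointly continuous on $\cS\times[0,\infty)$ and $(Z_{s})$ is right-continuous and adapted). By the Green--function formula of Lemma~\ref{lem:ztfor}(ii) and the trivial bound $y\wedge s\le y$, this expectation is at most $2\int_{0}^{\infty}\!\!\int_{\cS}y\left|\frac{\partial u_{f}}{\partial y}(x,y)\right|^{2}dx\,dy=2\norm{g_{1}(f)}_{2}^{2}$, which is finite by Proposition~\ref{prop:gfunc} with $k=1$ and $p=2$ (recall $f\in R_{5}\subset L^{2}(\cS)$). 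Hence $J\in V$ and $\proj_{V}J=J$.

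The heart of the proof is (b). The generator of $Z_{s}=(X_{s},Y_{s})$ splits along the two independent coordinates, and the defining relation $A_{P}=-(-A_{T})^{1/2}$ --- equivalently the harmonicity identity $\frac{\partial^{2}u_{f}}{\partial y^{2}}+A_{T}u_{f}=0$ --- is exactly what makes $u_{f}$ space--time harmonic for $Z$, so that $u_{f}(Z_{s\wedge\tau})$ is a martingale with no bounded--variation part. Because $f\in R_{5}$ gives enough regularity ($\frac{\partial^{k}u_{f}}{\partial y^{k}}\in R_{5-k}\subset C_{0}(\cS)$ for $1\le k\le5$) to apply a generalized It\^o formula (combining Dynkin's formula for $X$ with the one-dimensional It\^o formula for $Y$), one obtains
\[
u_{f}(Z_{s\wedge\tau})-u_{f}(Z_{0})=\int_{0}^{s\wedge\tau}\frac{\partial u_{f}}{\partial y}(Z_{r})\,dY_{r}+N^{X}_{s\wedge\tau},
\]
where $N^{X}$ is the martingale carried by the $X$--coordinate. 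Evaluating at $s=t$ and applying $\proj_{V}$: the first term is unchanged by (a), while $N^{X}_{t\wedge\tau}$ is orthogonal to every $I(A)\in V$, because $Y$ is independent of the $X$--coordinate, so $[N^{X},Y]\equiv 0$, hence $[N^{X},I(A)]=\int A\,d[N^{X},Y]\equiv 0$, and therefore $\cE[\,N^{X}_{t\wedge\tau}\,I(A)_{t\wedge\tau}\,]=\cE[\,[N^{X},I(A)]_{t\wedge\tau}\,]=0$. Thus $\proj_{V}(N^{X}_{t\wedge\tau})=0$ and the identity follows by linearity of $\proj_{V}$.

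I expect the genuine difficulty to be the step summarized in one sentence above: rigorously producing the decomposition $u_{f}(Z_{s\wedge\tau})-u_{f}(Z_{0})=\int_{0}^{s\wedge\tau}\frac{\partial u_{f}}{\partial y}(Z_{r})\,dY_{r}+N^{X}_{s\wedge\tau}$ when $\{X_{s}\}$ is merely a Feller process rather than a diffusion. There is no classical chain rule available; instead one must build the martingale additive functional $N^{X}$ directly (a Fukushima--type decomposition of $y\mapsto u_{f}(X_{\cdot},y)$, whose zero--energy part vanishes by harmonicity), splice it to the one-dimensional It\^o formula in the $Y$--variable, handle the killing at $\tau$, and --- most delicately --- verify that every object produced genuinely belongs to $L^{2}(\Omega,\cP)$, which is where the large regularity class $R_{5}$ is spent. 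Rather than reproduce this, I would invoke Varopoulos's construction in \cite[pp.~50--59]{Va80}, as the paper does.
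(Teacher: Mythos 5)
The paper itself offers no proof of this proposition; it explicitly defers to Varopoulos \cite[pp.~50--59]{Va80}, noting only that ``the proof is fairly lengthy.'' Your proposal does the same thing in substance --- it correctly isolates the two things that must be checked (membership of the stochastic integral in $V$, and orthogonality of the remainder to $V$), correctly disposes of $(a)$ via the Green-function formula and Proposition~\ref{prop:gfunc}, correctly notes that the $\sF_0$--measurable term $u_f(Z_0)$ is automatically orthogonal to $V$, sketches the Itô/Fukushima-type decomposition that makes $(b)$ plausible, and then, like the paper, hands off the genuinely hard step (constructing the martingale additive functional $N^X$ and justifying the decomposition for a Feller process that is not a diffusion) to Varopoulos. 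So this is the same approach as the paper, augmented by a sound heuristic outline of what the cited pages actually establish.
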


\section{Proofs of the main results}
In this section, we provide the proofs of Theorem \ref{thm1} and Theorem \ref{lem1}. 
In the first place, we present the proof of Theorem \ref{lem1} which produces the Hardy-Littlewood-Sobolev inequality for the fractional square function $\cG_{\alpha}$. 

\begin{proof}[Proof of Theorem \ref{lem1}]
We begin by splitting $\cG_{\alpha}(f)^{2}$ into two parts
\begin{eqnarray*}
\cG_{\alpha}(f)(x)^{2}
&=&\int_{0}^{\infty}y^{2\alpha+1}\abs{\frac{\partial u_{f}}{\partial y}(x,y)}^{2}dy\\
&=&\int_{0}^{\delta}y^{2\alpha+1}\abs{\frac{\partial u_{f}}{\partial y}(x,y)}^{2}dy+
	\int_{\delta}^{\infty}y^{2\alpha+1}\abs{\frac{\partial u_{f}}{\partial y}(x,y)}^{2}dy.
\end{eqnarray*}
For the first integral, we apply Lemma \ref{lem:ptest} to obtain
\begin{eqnarray*}
\int_{0}^{\delta}y^{2\alpha+1}\abs{\frac{\partial u_{f}}{\partial y}(x,y)}^{2}dy
&\leq& c_{1}\int_{0}^{\delta}y^{2\alpha-1}\Abs{u_{|f|}(x,\frac{1}{\sqrt{2}}y)}^{2}dy\\
&\leq& C_{\alpha}\sup_{y>0}\Abs{u_{|f|}(x,y)}^{2}\delta^{2\alpha}.
\end{eqnarray*}
Likewise the other integral can be bounded using Lemma \ref{lem:sms} and Lemma \ref{lem:ptest}
\begin{eqnarray*}
\int_{\delta}^{\infty}y^{2\alpha+1}\abs{\frac{\partial u_{f}}{\partial y}(x,y)}^{2}dy
\leq c_{1}\int_{\delta}^{\infty}y^{2\alpha-1}\abs{u_{|f|}(x,\frac{1}{\sqrt{2}}y)}^{2}dy
\leq C_{\alpha}\norm{f}^{2}_{p}\delta^{2(\alpha-\frac{d}{p})}.
\end{eqnarray*}
Consequently, we see
\begin{eqnarray*}
\cG_{\alpha}(f)(x)
\leq C_{\alpha,p,d}\paren{
\sup_{y>0}\abs{u_{|f|}(x,y)}\delta^{\alpha}+\norm{f}_{p}\delta^{\alpha-\frac{d}{p}}
}
\end{eqnarray*}
for some constant $C_{\alpha,p,d}$.
Optimizing the RHS in $\delta$ yields
\begin{eqnarray*}
\cG_{\alpha}(f)(x)\leq C_{\alpha,p,d}\paren{\sup_{y>0}\abs{u_{|f|}(x,y)}}^{1-\frac{\alpha p}{d}}\norm{f}_{p}^{\frac{\alpha p}{d}}.
\end{eqnarray*}
We observe by Proposition \ref{prop:stmax} that
\begin{eqnarray*}
\norm{\paren{\sup_{y>0}\abs{u_{|f|}(x,y)}}^{1-\frac{\alpha p}{d}}}_{q}
=\norm{\sup_{y>0}\abs{u_{|f|}(x,y)}}_{p}^{\frac{p}{q}}
\leq C_{p}\norm{f}_{p}^{\frac{p}{q}}
\end{eqnarray*}
since $1-\frac{\alpha p}{d}=\frac{p}{q}$.
As a result, we acquire 
\begin{eqnarray*}
\norm{\cG_{\alpha}(f)}_{q} 
&\leq& C_{\alpha,p,d}\norm{\paren{\sup_{y>0}\abs{u_{|f|}(x,y)}}^{p/q}}_{q}\norm{f}_{p}^{1-p/q}\\
&=& C_{\alpha,p,d}\norm{\paren{\sup_{y>0}\abs{u_{|f|}(x,y)}}}^{p/q}_{p}\norm{f}_{p}^{1-p/q}\\
&\leq&C_{\alpha,p,d}\norm{f}_{p},
\end{eqnarray*}
which finishes the proof.
\end{proof}

\begin{proof}[Proof of Theorem \ref{thm1}]
In the first place, we want to show
\begin{eqnarray}\label{eq:step1}
\cE[\int_{0}^{\tau}Y_{t}^{\alpha}\Abs{\frac{\partial u_{f}}{\partial y}(Z_{t})}\Abs{\frac{\partial u_{h}}{\partial y}(Z_{t})}dt]
\leq C_{\alpha,p,d}\norm{f}_{p}\norm{h}_{q'}.
\end{eqnarray}
Applying the Green formula \eqref{eq:ztfor2}, we see
\begin{eqnarray*}
\cE[\int_{0}^{\tau}Y_{t}^{\alpha}\Abs{\frac{\partial u_{f}}{\partial y}(Z_{t})}\Abs{\frac{\partial u_{h}}{\partial y}(Z_{t})}dt]
=2\int_{\cS}\int_{0}^{\infty}(y\wedge s)y^{\alpha}\Abs{\frac{\partial u_{f}}{\partial y}}\Abs{\frac{\partial u_{h}}{\partial y}}dydx.
\end{eqnarray*}
Then H\"older inequality tells us that
\begin{eqnarray*}
\int_{\cS}\int_{0}^{\infty}(y\wedge s)y^{\alpha}\Abs{\frac{\partial u_{f}}{\partial y}}\Abs{\frac{\partial u_{h}}{\partial y}}dydx
&\leq& \int_{\cS}\int_{0}^{\infty}y^{\alpha+\frac{1}{2}}\Abs{\frac{\partial u_{f}}{\partial y}}y^{\frac{1}{2}}\Abs{\frac{\partial u_{h}}{\partial y}}dydx\\
&\leq& \int_{\cS}\cG_{\alpha}(f) g_{1}(h)dx\\
&\leq& \norm{\cG_{\alpha}(f)}_{q}\norm{g_{1}(h)}_{q'}.
\end{eqnarray*}
Thus the claim follows from Proposition \ref{prop:gfunc} and Theorem \ref{lem1}. 

Second, we will prove
\begin{eqnarray*}
\brk{\cT_{\alpha}^{s}(f),h}=\cE[\int_{0}^{\tau}Y_{t}^{\alpha}\frac{\partial u_{f}}{\partial y}(Z_{t})\frac{\partial u_{h}}{\partial y}(Z_{t})dt]
\end{eqnarray*}
We truncate $\sT_{\alpha}^{s}$ in a way that for $N>0$
\begin{eqnarray*}
\sT_{\alpha}^{s, N}(f)(x)=\cE[\int_{0}^{\tau}(Y_{t}^{\alpha}\wedge N)\frac{\partial u_{f}}{\partial y}(Z_{t})dY_{t}|X_{\tau}=x].
\end{eqnarray*}
From Lemma \ref{lem:ztfor} and the properties of conditional expectation, we then see that
\begin{eqnarray*}
\brk{\sT_{\alpha}^{s, N}(f),h}
&=&\cE[\sT_{\alpha}^{s, N}(f)(X_{\tau})h(X_{\tau})]\\
&=&\cE[\cE[\int_{0}^{\tau}(Y_{t}^{\alpha}\wedge N)\frac{\partial u_{f}}{\partial y}(Z_{t})dY_{t}|X_{\tau}]h(X_{\tau})]\\
&=&\cE[\cE[h(X_{\tau})\int_{0}^{\tau}(Y_{t}^{\alpha}\wedge N)\frac{\partial u_{f}}{\partial y}(Z_{t})dY_{t}|X_{\tau}]]\\
&=&\cE [h(X_{\tau})\int_{0}^{\tau}(Y_{t}^{\alpha}\wedge N)\frac{\partial u_{f}}{\partial y}(Z_{t})dY_{t}].
\end{eqnarray*}
Note that the stochastic integral $\int_{0}^{\tau}(Y_{t}^{\alpha}\wedge N)\frac{\partial u_{f}}{\partial y}(Z_{t})dY_{t}$ belongs to $L^{2}(\Omega,\cP)$. Indeed, it follows from the Green formula \eqref{eq:ztfor2} that
\begin{eqnarray*}
\cE[\int_{0}^{\tau}(Y_{t}^{2\alpha}\wedge N^{2})\Abs{\frac{\partial u_{f}}{\partial y}(Z_{t})}^{2}dt]
&\leq& 2N^{2}\int_{0}^{\infty}\int_{\cS}y\Abs{\frac{\partial u_{f}}{\partial y}(x,y)}^{2}dxdy\\
&=& 2N^{2}\norm{g_{1}(f)}_{2}^{2}\\
&\leq& cN^{2}\norm{f}_{2}^{2}<\infty.
\end{eqnarray*}
Furthermore the integral $\int_{0}^{\tau}(Y_{t}^{\alpha}\wedge N)\frac{\partial u_{f}}{\partial y}(Z_{t})dY_{t}\in V$, where $V$ is the closed subspace of $L^{2}(\Omega,\cP)$ of all stochastic integral with respect to $(Y_{t})_{t\geq0}$. Thus the projection lemma(Proposition \ref{prop:proj}) yields that
\begin{eqnarray*}
\brk{\sT_{\alpha}^{s, N}(f),g}
&=&\cE [\Paren{\int_{0}^{\tau}\frac{\partial u_{h}}{\partial y}(Z_{t})dY_{t}}\Paren{\int_{0}^{\tau}(Y_{t}^{\alpha}\wedge N)\frac{\partial u_{f}}{\partial y}(Z_{t})dY_{t}}]\\
&=&\cE [\int_{0}^{\tau}(Y_{t}^{\alpha}\wedge N)\frac{\partial u_{f}}{\partial y}(Z_{t})\frac{\partial u_{h}}{\partial y}(Z_{t})dt].
\end{eqnarray*}
By letting $N\to\infty$ with the help of \eqref{eq:step1} and the dominated convergence theorem, we conclude the second claim.

What is left is to show that $\cT_{\alpha}^{s}f$ converges to $c_{\alpha}\cI_{\alpha}(f)$ as $s$ tends to $\infty$ in distribution sense. 
Note that we see by \eqref{eq:thm1} and the Green function formula \eqref{eq:ztfor2} that 
\begin{eqnarray*}
\langle \cT^{s}_{\alpha}f,h \rangle
=2\int_{0}^{\infty}\int_{\cS}(y\wedge s)y^{\alpha}\frac{\partial u_{f}}{\partial y}(x,y)\frac{\partial u_{h}}{\partial y}(x,y)dxdy.
\end{eqnarray*}
Thus it is enough to show 
\begin{eqnarray*}
\langle \cI_{\alpha}f,h \rangle
=C_{\alpha}\int_{0}^{\infty}\int_{\cS}y^{\alpha+1}\frac{\partial u_{f}}{\partial y}(x,y)\frac{\partial u_{h}}{\partial y}(x,y)dxdy.
\end{eqnarray*}
To see this, we exploit the spectral representation for the Poisson semigroup. Since $f$ and $g$ are in $L^{2}$, we write
\begin{eqnarray*}
\int_{\cS}\frac{\partial u_{f}}{\partial y}(x,y)\frac{\partial u_{h}}{\partial y}(x,y)dx
&=&\langle \frac{\partial u_{f}}{\partial y}(\cdot,y),\frac{\partial u_{h}}{\partial y}(\cdot,y)\rangle\\
&=&\langle \int_{0}^{\infty}\lambda^{1/2}e^{-\lambda^{1/2}y}dE_{\lambda}f,\int_{0}^{\infty}\lambda^{1/2}e^{-\lambda^{1/2}y}dE_{\lambda}h\rangle\\
&=&\int_{0}^{\infty}\lambda e^{-2\lambda^{1/2}y}d\langle E_{\lambda}f,E_{\lambda}h\rangle.
\end{eqnarray*}
It then follows from Fubini's theorem that
\begin{eqnarray*}
\int_{0}^{\infty}\int_{\cS}y^{\alpha+1}\frac{\partial u_{f}}{\partial y}(x,y)\frac{\partial u_{h}}{\partial y}(x,y)dxdy
&=&\int_{0}^{\infty}y^{\alpha+1}\langle \frac{\partial u_{f}}{\partial y}(\cdot,y),\frac{\partial u_{h}}{\partial y}(\cdot,y)\rangle dy\\
&=&\int_{0}^{\infty}y^{\alpha+1}\paren{\int_{0}^{\infty}\lambda e^{-2\lambda^{1/2}y}d\langle E_{\lambda}f,E_{\lambda}h\rangle} dy\\
&=&\int_{0}^{\infty}\lambda \paren{\int_{0}^{\infty}y^{\alpha+1}e^{-2\lambda^{1/2}y} dy}d\langle E_{\lambda}f,E_{\lambda}h\rangle\\
&=&\frac{\Gamma(\alpha+2)}{2^{\alpha+2}}\int_{0}^{\infty}\lambda^{-\alpha/2} d\langle E_{\lambda}f,E_{\lambda}h\rangle\\
&=&C_{\alpha}\langle I_{\alpha}f,h \rangle,
\end{eqnarray*}
which completes the proof.
\end{proof}

\subsection*{Acknowledgement}
I would like to thank Professor Rodrigo Ba\~nuelos, my academic advisor, for suggesting this problem and for his invaluable help and encouragement while writing this paper.

\end{document}